\newtheorem{theorem}{Theorem}[section]
\newtheorem{example}{Example}[section]
\newtheorem{corollary}{Corollary}[section]
\newtheorem{lemma}{Lemma}[section]
\newenvironment{proof}{\textbf{Proof.}}{\qquad $\Box$ \bigskip }
\newcommand{\npmatrix}[1]{\left( \begin{matrix} #1 \end{matrix} \right)}
\newcommand{\R}{\mathbb{R}}
\newcommand{\mc}{\mathcal}
\newcommand{\rank}{\mathrm{rank}}
    \definecolor{helena}{rgb}{.2,.8,.4}
    \definecolor{polona}{rgb}{.8,.2,.2}
   \definecolor{todo}{rgb}{.2,.2,.8}
\begin{document}
\title{Diagonal realizability in the Nonnegative Inverse Eigenvalue Problem}
\author{Thomas J. Laffey, Helena \v Smigoc}
%
%
\bigskip

\maketitle

\begin{abstract}
We show that if a list of nonzero complex numbers $\sigma=(\lambda_1,\lambda_2,\ldots,\lambda_k)$ is the nonzero spectrum of a diagonalizable nonnegative matrix, then $\sigma$ is the nonzero spectrum of a diagonalizable nonnegative matrix of order $k+k^2.$ 
\end{abstract}

\section{Introduction}

The \emph{nonnegative inverse eigenvalue problem} (NIEP) asks which lists of complex numbers can be the spectrum of some entry-wise nonnegative matrix. If a list of complex numbers $\sigma$ is the spectrum of some entry-wise nonnegative matrix $A$, we say that   $\sigma$ is \emph{realizable}, and that $A$ \emph{realises} $\sigma$. The NIEP is a difficult open problem, however, several partial results are known. For other sources of literature on the problem we
refer the reader to the following works and the citations that appear in them: \cite{MR2039754, MR1628398,  MR2232926, MR2742335, MR2098598, MR2112207, MR1473205}.

Motivated by applications in ergodic theory, Boyle and Handelman
\cite{MR1097240} solved a related question: which lists of complex numbers can be the nonzero spectrum of a nonnegative matrix? In particular, they proved that if $\sigma=(\lambda_1,\lambda_2,\ldots,\lambda_n)$ is a list of complex
numbers such that the power sums $s_k=\sum_{i=1}^n \lambda_i^k>0$ for all positive integers $k,$ and $\lambda_1 >|\lambda_i|$ for $i=2,3,\ldots,n$,  then
there exists a nonnegative integer $N$ such that the list obtained by appending
$N$ zeros to $\sigma$ is realizable by a nonnegative $(n+N)\times (n+N)$
matrix. It can be shown that the least $N$ required here is in general not
bounded as a function of $n.$ The proof in \cite{MR1097240}  is not constructive and does not
enable one to determine the size of the $N$ required for realizability in
the general case. A constructive approach to the Boyle and Handelman result that provides a bound on $N$, the number of zeros needed for realizability,  was given by Laffey in \cite{MR2890950}. 

Several other variants of the NIEP have been considered in the literature. One that attracted a lot of attention is the symmetric nonnegative inverse eigenvalue problem (SNIEP), where it is demanded that the realising nonnegative matrix is symmetric. The corresponding question about the nonzero spectrum of a symmetric matrix is open. Unlike in the general case, the number of zeros needed to be added to the nonzero spectrum of a symmetric nonnegative matrix in order to obtain a nonnegative symmetric realisation is bounded by a function of the number of nonzero elements in the the list. 

\begin{theorem}[\cite{MR1350951}]
Let $A \in M_n(\R)$ be a symmetric nonnegative matrix of rank $k$. Then, there exists a symmetric nonnegative matrix $\tilde{A} \in M_{k(k+1)/2}(\R)$ with the same nonzero spectrum as $A$. 
\end{theorem}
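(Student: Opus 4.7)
My plan is to combine a spectral decomposition of $A$ with a dimension-counting argument in the space of symmetric $k\times k$ matrices. Write $A = V\Lambda V^T$, where $V \in \R^{n\times k}$ has orthonormal columns spanning the range of $A$ and $\Lambda \in \R^{k\times k}$ is the diagonal matrix of the $k$ nonzero eigenvalues. Denote the rows of $V$ by $v_1,\dots,v_n \in \R^k$; nonnegativity of $A$ becomes $v_i^T \Lambda v_j \geq 0$ for all $i,j$, while orthonormality of the columns of $V$ gives $\sum_i v_iv_i^T = I_k$.

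The key observation is that the $n$ rank-one matrices $v_iv_i^T$ all lie in the space of symmetric $k \times k$ matrices, which has dimension $k(k+1)/2$. When $n > k(k+1)/2$ these are linearly dependent, so there is a nonzero $c \in \R^n$ with $\sum_i c_i v_iv_i^T = 0$. Splitting $c = c^+ - c^-$ with $c^+, c^- \geq 0$ of disjoint support, one obtains a common positive semidefinite matrix $M := \sum_i c_i^+ v_iv_i^T = \sum_i c_i^- v_iv_i^T$. I would use this coincidence to define an operation on $A$, for example a conjugation by an orthogonal matrix built from $c$ or a direct row-and-column merging, that produces an $(n-1)\times(n-1)$ symmetric nonnegative matrix with the same nonzero spectrum. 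Iterating the reduction until $n \leq k(k+1)/2$, and padding with zero rows and columns if needed, yields the required $\tilde A$.

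The main obstacle is the reduction step itself: a linear dependence among the $v_iv_i^T$ guarantees cancellation in the symmetric square, but translating this into a concrete operation that preserves \emph{both} nonnegativity and the nonzero spectrum is delicate. The coefficients $c_i$ typically involve both signs, so even if the nonzero spectrum is preserved by a similarity transformation, entrywise nonnegativity may fail. The indefinite case, where $\Lambda$ has both positive and negative diagonal entries, looks most demanding: the bilinear form $(u,v) \mapsto u^T \Lambda v$ is of Lorentzian type, and its positivity structure interacts nontrivially with the linear dependences in $\mathrm{Sym}^2(\R^k)$. A natural line of attack would be to exploit the common PSD matrix $M$ above together with a Carath\'eodory-style argument on the cone of PSD matrices, whose ambient dimension $k(k+1)/2$ matches the bound in the statement.
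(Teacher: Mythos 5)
Your setup is exactly the right one --- the decomposition $A = V\Lambda V^T$ with $V^TV = I_k$, the rank-one matrices $v_iv_i^T$ living in the $k(k+1)/2$-dimensional space of symmetric $k\times k$ matrices, and the identity $\sum_i v_iv_i^T = I_k$ --- but the proof is not complete, and the reduction step you propose is not the one that works. A raw linear dependence $\sum_i c_iv_iv_i^T=0$ with coefficients of both signs gives you no control over entrywise nonnegativity, as you yourself observe; no row-merging or orthogonal conjugation built from such a $c$ will rescue this, because the task is not to cancel one row at a time but to re-express the \emph{whole} sum $\sum_i v_iv_i^T$ economically with nonnegative weights. The missing step is precisely the conic form of Carath\'eodory's theorem (Theorem \ref{thm:Caratheodory} of this paper): the matrix $I_k=\sum_{i=1}^n v_iv_i^T$ lies in the convex cone generated by the $v_iv_i^T$ inside a space of dimension $k(k+1)/2$, so one can write $I_k=\sum_{j=1}^{N}\alpha_j v_{i_j}v_{i_j}^T$ with $N\le k(k+1)/2$ and all $\alpha_j\ge 0$.

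Once you have this, the construction is immediate and the nonnegativity worry evaporates. Let $\tilde V$ be the $N\times k$ matrix whose $j$-th row is $\sqrt{\alpha_j}\,v_{i_j}^T$ and set $\tilde A=\tilde V\Lambda\tilde V^T$. Then $\tilde V^T\tilde V=\sum_j\alpha_jv_{i_j}v_{i_j}^T=I_k=V^TV$, so the nonzero spectrum of $\tilde A$ equals that of $\Lambda\tilde V^T\tilde V=\Lambda$, which is the nonzero spectrum of $A$; and entrywise
$$\tilde A_{jl}=\sqrt{\alpha_j\alpha_l}\,v_{i_j}^T\Lambda v_{i_l}=\sqrt{\alpha_j\alpha_l}\,A_{i_ji_l}\ge 0,$$
a nonnegative multiple of an entry of $A$ rather than a signed combination. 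Padding with zero rows and columns when $N<k(k+1)/2$ finishes the proof. This is exactly the mechanism the present paper adapts to the nonsymmetric setting in Theorem \ref{thm:general}, where the cone generated by the matrices $q_jv_j^T$ replaces the cone generated by the $v_iv_i^T$; the symmetric case is the cleaner of the two because the scaling factors $\sqrt{\alpha_j\alpha_l}$ act symmetrically on rows and columns, so nonnegativity is inherited directly from $A$.
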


This result was used in the first proof that showed the symmetric nonnegative inverse eigenvalue problem is different to the real nonnegative inverse eigenvalue problem, for the problem of determining which lists of real numbers are realizable. The bound provided in the theorem above is believed not to be tight. In fact, examples of lists where one zero added makes the list symmetrically realizable are known, but at present there are no known examples where three or more zeros are required in symmetric realizability. 

In this note, we consider analogous questions for diagonal realizability. In particular, we show that if a list is the nonzero spectrum of a diagonalizable nonnegative matrix with $k$ nonzero eigenvalues, then it can be realised by a nonnegative diagonalizable matrix of order $k(k+1).$ 

The ideas that we use in this note are similar to those in \cite{MR1350951}, where Carath\' eodory's theorem plays a central role. 

\begin{theorem}[Carath\' eodory]\label{thm:Caratheodory} Let $\mc V$ be an $l$-dimensional vector space, and let $v_i \in \mc V,$ $i=1,2,\ldots,p$. Let $\mc K$ be the convex cone generated by $v_1,v_2, \ldots, v_p$. Then each point in $\mc K$ can be expressed as a linear combination, with nonnegative coefficients, of $l$ or fewer of the $v_i$'s. 
\end{theorem}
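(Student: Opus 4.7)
The plan is to run the classical ``minimal representation plus linear dependence'' argument that is standard for the conic version of Carath\'eodory's theorem. Given $v \in \mc K$, I would write $v = \sum_{i=1}^{m} c_i v_{j_i}$ with all $c_i > 0$, and choose such a representation with $m$ as small as possible among all representations of $v$ as a nonnegative combination of the $v_i$'s. The conclusion to aim for is then exactly that this minimal $m$ satisfies $m \le l$.

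Suppose for contradiction that $m > l$. Since $\dim \mc V = l$, the vectors $v_{j_1}, \ldots, v_{j_m}$ must be linearly dependent, so there exist scalars $\alpha_1, \ldots, \alpha_m$, not all zero, with $\sum_{i=1}^m \alpha_i v_{j_i} = 0$. After replacing $(\alpha_i)$ with $(-\alpha_i)$ if needed, at least one $\alpha_i$ is strictly positive. I would then set
\[
t = \min\!\left\{ \frac{c_i}{\alpha_i} : \alpha_i > 0 \right\}
\]
and consider the identity $v = \sum_{i=1}^m (c_i - t\alpha_i) v_{j_i}$. By the choice of $t$, every coefficient $c_i - t\alpha_i$ is nonnegative (for $\alpha_i \le 0$ it only grows beyond $c_i > 0$, and for $\alpha_i > 0$ it is nonnegative by definition of the minimum) and at least one coefficient is exactly zero, namely at an index achieving the minimum.

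This yields a nonnegative representation of $v$ that uses strictly fewer than $m$ of the $v_i$'s, contradicting the minimality of $m$, and so $m \le l$ as required. I do not expect any real obstacle here: the whole argument is the ``use one linear dependence to eliminate one generator'' trick, and the only care needed is the minor bookkeeping around ensuring that some $\alpha_i > 0$ after a possible sign flip and that the set indexing the minimum defining $t$ is nonempty. In particular, no compactness or topological input is needed, in contrast to the affine version of Carath\'eodory's theorem.
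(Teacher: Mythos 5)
Your argument is the standard and correct proof of the conic form of Carath\'eodory's theorem: take a representation of $v$ as a positive combination of a minimal number $m$ of generators, and if $m>l$ use a linear dependence among those generators to shift coefficients until one vanishes while all remain nonnegative, contradicting minimality. All the details check out (the set indexing the minimum is nonempty after the sign flip, $t\ge 0$, and each new coefficient $c_i-t\alpha_i$ is nonnegative with at least one equal to zero); the only edge case left implicit is $v=0$, which is handled trivially by the empty combination. Note that the paper itself states this theorem as a classical citation and gives no proof, so there is nothing to compare against; your write-up supplies exactly the argument one would expect.
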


%

\section{Main Results}

Our approach will depend on the existence of a principal sub-matrix $A_{11}$ of the original matrix $A$ that has the same rank as $A$.
We start by considering the structure of a matrix $A$ with a principal submatrix $A_{11}$ of the same rank as $A$. 

\begin{lemma}\label{lem:full rank subm}
Let  $A=\npmatrix{A_{11} & A_{12} \\A_{21} & A_{22}} \in M_n({\R})$ and $A_{11} \in M_m({\R})$ have the same rank. Then there exists an $m \times (n-m)$ matrix $Q$ such that
$$A=\npmatrix{A_{11} & A_{11}Q \\ A_{21} & A_{21}Q},$$ and $A$ is similar to $$\npmatrix{A_{11}+QA_{21} & 0 \\ A_{21} & 0}.$$
 \end{lemma}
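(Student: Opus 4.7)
The proof splits naturally into two parts: first establishing the existence of $Q$ from the rank hypothesis, then verifying the similarity by an explicit change-of-basis.

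For the first part, the plan is to chase column spaces. Denote by $C_L$ the column space of the first $m$ columns of $A$, i.e.\ of $\npmatrix{A_{11}\\ A_{21}}$, and by $C(A)$ the full column space of $A$. Since projecting onto the top $m$ rows sends $C_L$ onto the column space of $A_{11}$, we get the chain $\rk(A_{11}) \le \rk\npmatrix{A_{11}\\ A_{21}} \le \rk(A)$. The hypothesis $\rk(A_{11})=\rk(A)$ forces equality throughout, so in particular $C_L = C(A)$. Hence every column of $\npmatrix{A_{12}\\ A_{22}}$ lies in $C_L$, and assembling the coefficients yields a matrix $Q \in M_{m,n-m}(\R)$ with $\npmatrix{A_{12}\\A_{22}} = \npmatrix{A_{11}\\A_{21}}Q$, which is exactly $A_{12}=A_{11}Q$ and $A_{22}=A_{21}Q$.

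For the second part, the plan is to conjugate by the elementary block matrix $P=\npmatrix{I_m & -Q \\ 0 & I_{n-m}}$, whose inverse is $P^{-1}=\npmatrix{I_m & Q \\ 0 & I_{n-m}}$. A direct computation using the form of $A$ established above gives
\[
AP = \npmatrix{A_{11} & 0 \\ A_{21} & 0},
\]
since the $A_{11}Q$ and $A_{21}Q$ blocks cancel against the $-Q$ entries. Left-multiplying by $P^{-1}$ then adds $Q$ times the bottom block into the top block, producing
\[
P^{-1}AP = \npmatrix{A_{11}+QA_{21} & 0 \\ A_{21} & 0},
\]
as required.

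There is no serious obstacle here: the only nontrivial step is the observation that $\rk(A_{11})=\rk(A)$ is a strong condition which simultaneously forces both inclusions in $C(A_{11})\subseteq C_L\subseteq C(A)$ (on the row side) to collapse, which is what makes the column span of $\npmatrix{A_{11}\\A_{21}}$ capture all of $A$. Once $Q$ has been produced, the similarity statement is a routine verification, and the specific form $P=\npmatrix{I & -Q\\ 0 & I}$ is the natural choice because it is precisely the block-triangular elementary matrix that zeroes out the second block-column.
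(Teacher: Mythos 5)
Your proof is correct and takes essentially the same approach as the paper; in particular, the similarity is verified by conjugating with the very same block elementary matrix $\npmatrix{I_m & -Q \\ 0 & I_{n-m}}$. The one small difference is how $Q$ is produced: the paper first solves $A_{12}=A_{11}Q$ from the rank equality along the top block row and then deduces $A_{22}=A_{21}Q$ via a column operation and a second rank count, whereas you obtain both equations simultaneously from the collapse of the column-space chain for $\npmatrix{A_{11} \\ A_{21}}$ --- a marginally more economical version of the same idea.
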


\begin{proof}
Since $\rank(A_{11})=\rank\npmatrix{A_{11}& A_{12}}$, there exists an $m \times (n-m)$ matrix $Q$ so that $A_{12}=A_{11}Q$, and since $\rank(A_{11})$ is equal to the rank of:
\begin{align*}
\npmatrix{A_{11} & A_{11}Q \\A_{21} & A_{22}}\npmatrix{I_{m} & -Q \\ 0 & I_{n-m}}=\npmatrix{A_{11} & 0 \\ A_{21} & A_{22}-A_{21}Q},
\end{align*}
we conclude that $A_{22}=A_{21}Q$. This gives us: 
$$A=\npmatrix{A_{11} & A_{11}Q \\ A_{21} & A_{21}Q}.$$
We compute
$$\npmatrix{I_m & Q \\ 0 & I_{n-m}}\npmatrix{A_{11} & A_{11}Q \\ A_{21} & A_{21}Q}
\npmatrix{I_m & -Q \\ 0 & I_{n-m}}=\npmatrix{A_{11}+QA_{21} & 0 \\ A_{21} & 0}$$
to verify the second part of the statement. 
\end{proof}

Our first application of this lemma given below considers the general case. 

\begin{theorem}\label{thm:general}
Let $$A=\npmatrix{A_{11} & A_{12} \\ A_{21} & A_{22}} \in M_n(\R)$$ be a nonnegative matrix, where $A_{11} \in M_m(\R)$ has rank equal to the rank of $A$ and the rank of $A_{21}$ is equal to $r$. Furthermore, we assume $n > m+mr$.

Then there exists a nonnegative matrix $\tilde{A} \in M_{m+mr}(\R)$ whose nonzero spectrum is the same as the nonzero spectrum of $A$.  Moreover, the Jordan canonical forms of $A$ and $\tilde A$, denoted by $J(A)$ and $J(\tilde A)$ respectively, satisfy: $J(A)=J(\tilde A)\oplus 0_{n-m-mr}$.
\end{theorem}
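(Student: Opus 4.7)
The plan is to first invoke Lemma~\ref{lem:full rank subm} for $A$, which writes $A = \npmatrix{A_{11} & A_{11}Q \\ A_{21} & A_{21}Q}$ for some $m \times (n-m)$ matrix $Q$ and shows that $A$ is similar to $\npmatrix{A_{11}+QA_{21} & 0 \\ A_{21} & 0}$; in particular, the nonzero Jordan blocks of $A$ are exactly those of $X := A_{11} + QA_{21}$. Expanding $QA_{21} = \sum_{j=1}^{n-m} q_j r_j$, where $q_j$ is the $j$-th column of $Q$ and $r_j$ the $j$-th row of $A_{21}$, displays this $m \times m$ product as a sum of rank-one matrices $v_j := q_j r_j$, each of which lies in $\R^m \otimes V$, with $V \subseteq \R^m$ the $r$-dimensional row space of $A_{21}$; viewed inside $M_m(\R)$, this subspace has dimension $mr$. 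Since $QA_{21} = \sum_j v_j$ lies in the convex cone generated by $\{v_1, \dots, v_{n-m}\}$, Theorem~\ref{thm:Caratheodory} produces an index set $S \subseteq \{1, \dots, n-m\}$ with $|S| \le mr$ and nonnegative scalars $\lambda_j$ ($j \in S$) such that $QA_{21} = \sum_{j \in S} \lambda_j q_j r_j$.

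Next I would let $\tilde Q$ be the $m \times |S|$ matrix whose columns are $\lambda_j q_j$ and $\tilde A_{21}$ the $|S| \times m$ matrix whose rows are the corresponding $r_j$, so that $\tilde Q \tilde A_{21} = QA_{21}$ by construction. Define $\tilde A = \npmatrix{A_{11} & A_{11}\tilde Q \\ \tilde A_{21} & \tilde A_{21}\tilde Q}$, padding with zero rows and columns if $|S| < mr$ to reach size $m + mr$. Nonnegativity follows column-by-column: each column of $A_{11}\tilde Q$ is $\lambda_j$ times a column of $A_{12} = A_{11}Q$, and each entry of $\tilde A_{21}\tilde Q$ is $\lambda_j$ times an entry of $A_{22} = A_{21}Q$, while $\tilde A_{21}$ consists of rows of $A_{21}$. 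Since $\tilde A_{21}$ is a row submatrix of $A_{21}$ and $\rank A_{11} = \rank A$, one verifies $\rank \tilde A = \rank A_{11}$, so a second application of Lemma~\ref{lem:full rank subm} shows that $\tilde A$ is similar to $\npmatrix{X & 0 \\ \tilde A_{21} & 0}$. Thus $A$ and $\tilde A$ share the same nonzero Jordan blocks.

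The more delicate step is the full equality $J(A) = J(\tilde A) \oplus 0_{n-m-mr}$, that is, that every extra zero Jordan block of $A$ has size one. For this I would compare $\rank A^k$ with $\rank \tilde A^k$ via the block-triangular forms above: for $k \ge 1$, each rank equals $m$ minus the dimension of the kernel of the map $v \mapsto \npmatrix{X^k v \\ A_{21} X^{k-1} v}$ (respectively with $\tilde A_{21}$), and the substitution $u = X^{k-1} v$ rewrites these kernels as $\{v : X^{k-1} v \in \ker X \cap \ker A_{21}\}$ and $\{v : X^{k-1} v \in \ker X \cap \ker \tilde A_{21}\}$. The key observation is that $\rank A_{11} = \rank A$ forces $\ker A_{11} = \ker\npmatrix{A_{11} \\ A_{21}}$, so $\ker A_{11} \subseteq \ker A_{21} \subseteq \ker \tilde A_{21}$, and a short check shows that both intersections collapse to $\ker A_{11}$, a set depending on neither $A_{21}$ nor $\tilde A_{21}$. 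Consequently $\rank A^k = \rank \tilde A^k$ for all $k \ge 1$, while $\rank A^0 - \rank \tilde A^0 = n - m - mr$; translating this rank sequence into the Jordan partition at $0$ yields exactly $n - m - mr$ additional size-one zero blocks in $A$ and no other discrepancy. This last kernel-chasing is the step I expect to require the most care.
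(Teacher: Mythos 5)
Your construction of $\tilde A$ is exactly the paper's: factor $A=\npmatrix{A_{11} \\ A_{21}}\npmatrix{I_m & Q}$, apply Carath\'eodory in the $mr$-dimensional space of $m\times m$ matrices whose rows lie in the row space of $A_{21}$, and read off nonnegativity from the fact that $\tilde A$ consists of rows and columns of $A$ rescaled by nonnegative constants. Where you genuinely diverge is the Jordan-form claim. The paper proves it by exhibiting an explicit similarity $A\sim \tilde A'\oplus 0_{n-m-mr}$: after a permutation separating the kept rows $\tilde A_{21}$ from the discarded ones, the equality $\rank A=\rank\tilde A$ forces the discarded rows to lie in the row span of $A_{11}+QA_{21}$ and $\tilde A_{21}$, so one elementary block similarity annihilates them. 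You instead compare $\rank A^k$ with $\rank\tilde A^k$ for $k\ge 1$; your reduction of both kernels to the preimage under $X^{k-1}$ of $\ker X\cap\ker A_{21}=\ker X\cap\ker\tilde A_{21}=\ker A_{11}$ is correct (it rests on $\ker A_{11}=\ker\npmatrix{A_{11}\\ A_{21}}$ together with the factorizations $X-A_{11}=QA_{21}=\tilde Q\tilde A_{21}$, both of which you have), and combined with the standard fact that a nonzero eigenvalue occurring in only one diagonal block of a block-triangular matrix inherits that block's Jordan structure, it determines $J(A)$ completely. The trade-off: the paper's route yields the stronger, more transparent conclusion that $A$ is literally similar to $\tilde A\oplus 0_{n-m-mr}$, while your rank-of-powers argument is a valid, self-contained alternative that avoids having to show the discarded rows are dependent on the retained ones.
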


\begin{proof}
%
By Lemma \ref{lem:full rank subm}
$$A=\npmatrix{A_{11} \\A_{21}}\npmatrix{I_m & Q},$$
hence the nonzero spectrum of $A$ is equal to the nonzero spectrum of 
$$\npmatrix{I_m & Q} \npmatrix{A_{11} \\A_{21}}=A_{11}+QA_{21}.$$
Let us write down the columns of $Q$ and the rows of $A_{21}:$
$$Q=\npmatrix{q_1 & q_2 & \ldots & q_{n-m}} \text{, }A_{21}=\npmatrix{v_1^T \\ v_{2}^T \\ \vdots \\ v_{n-m}^T}.$$ Let $\mc V$ denote the span of $v_i^T$'s, and let $\mc V^{\perp}$ denote its orthogonal complement. The vector space 
 $$\mc S=\{M \in M_m(\R); \, Mv=0 \text{ for all }v \in \mc V^{\perp}\},$$ has dimension $mr$, and contains $q_jv_j^T$, for $i=1,2 \ldots, n-m.$   
By Theorem \ref{thm:Caratheodory} we can write: 
$$QA_{21}=\sum_{j=1}^{mr} \alpha_j q_{r_j}v_{r_j}^T,$$
where $\alpha_j \geq 0.$ We define: 
$$\tilde{Q}=\npmatrix{\alpha_1q_{r_1} & \alpha_2 q_{r_2} & \ldots & \alpha_{mr}q_{r_{mr}}} \text{, }\tilde{A}_{21}=\npmatrix{v_{r_1}^T \\ v_{r_2}^T \\ \vdots \\ v_{r_{mr}}^T},$$
and $$\tilde{A}:=\npmatrix{A_{11} \\ \tilde{A}_{21}}\npmatrix{I_m & \tilde{Q}} \in M_{m+mr}(\R).$$ Note that $\tilde A$ is obtained from $A$ by deleting some rows and corresponding columns of $A$, and then multiplying some of the surviving columns by nonnegative constants. Hence, the nonnegativity of $\tilde A$ is clear from the construction. Furthermore, the nonzero spectrum of $A$ is equal to the nonzero spectrum of $\tilde{A}$, since $$\npmatrix{I_m & Q} \npmatrix{A_{11} \\A_{21}}=\npmatrix{I_m & \tilde{Q}} \npmatrix{A_{11} \\ \tilde{A}_{21}}.$$

We still need to prove the connection between the Jordan forms of $A$ and $\tilde A$. From the construction of $\tilde A$ we see that $\rank(\tilde{A}) \leq \rank(A)$, but since $\tilde A$ contains a principal submatrix $A_{11}$ whose rank is the same as rank of $A$, we conclude that $\rank(A)=\rank(\tilde{A})$. 

Lemma \ref{lem:full rank subm} tells us that $A$ is similar to 
$$A'=\npmatrix{A_{11}+QA_{21} & 0 \\ A_{21} & 0}$$ and $\tilde A$ is similar to 
$$\tilde{A}'=\npmatrix{A_{11}+\tilde{Q}\tilde{A}_{21} & 0 \\ \tilde{A}_{21} & 0}=\npmatrix{A_{11}+QA_{21} & 0 \\ \tilde A_{21} & 0}.$$ We use a permutation similarity on $A'$ to deduce that $A$ is similar to a matrix of the form: 
$$A''=\npmatrix{A_{11}+QA_{21} & 0 &0 \\ \tilde{A}_{21} & 0 & 0 \\ \tilde{\tilde{A}}_{21} & 0 & 0}.$$
Since $\rank(A)=\rank(\tilde{A})$, the rows of $\tilde{\tilde{A}}_{21}$ are linear combinations of rows of $A_{11}+QA_{21}$ and $\tilde{A}_{21}$. Hence we can find an $(n-m-mr) \times m$ matrix $S$  and an $(n-m-mr) \times mr$ matrix $T$ such that a similarity of the form:
$$\npmatrix{I_m & 0 & 0 \\ 0 & I_{mr} & 0 \\ S & T & I_{n-m-mr}}$$
on the matrix $A''$ results in:
$$\npmatrix{A_{11}+QA_{21} & 0 & 0\\ \tilde{A}_{21} & 0 & 0 \\ 0 & 0 & 0 }.$$
Now we know that $A$ is similar to a matrix of the form $\tilde{A}' \oplus 0_{n-m-mr},$ and the relationship between $J(A)$ and $J(A')$ follows. 
\end{proof}

The following lemma allows us to obtain a bound on the size of $A_{11}$ in terms of the rank of $A$ and the number of nonzero eigenvalues of $A$ in the results above. 

\begin{lemma}
Let $A \in M_n(\R)$ have $l$ nonzero eigenvalues and rank $k$. Then $A$ contains a principal submatrix of order $2k-l$ whose rank is equal to the rank of $A$.  
\end{lemma}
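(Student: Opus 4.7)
The plan is a Schur-complement argument that separates the invertible and nilpotent parts of the spectrum. First, I would locate an $l \times l$ invertible principal submatrix of $A$. The coefficient of $x^{n-l}$ in $\chi_A(x)$ is simultaneously $(-1)^l e_l(\lambda_1,\ldots,\lambda_n)$ and $(-1)^l$ times the sum of all $l \times l$ principal minors of $A$. Because only $l$ of the eigenvalues are nonzero, every term of $e_l$ other than the product of the nonzero eigenvalues contains a zero factor, so the sum of $l \times l$ principal minors equals that product (up to sign), which is nonzero. Hence there is $S_1 \subseteq \{1,\ldots,n\}$ with $|S_1| = l$ and $A[S_1,S_1]$ invertible.

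Next, I would pass to the Schur complement
$$S = A[S_1^c, S_1^c] - A[S_1^c, S_1]\,A[S_1, S_1]^{-1}\,A[S_1, S_1^c].$$
Guttman's rank additivity formula gives $\rank(S) = \rank(A) - \rank(A[S_1,S_1]) = k - l$. Now I would invoke the classical fact that any matrix of rank $r$ contains a principal submatrix of order at most $2r$ of the same rank: choose $r$ linearly independent rows and $r$ linearly independent columns, and observe that the principal submatrix indexed by the union of these row and column indices contains a nonsingular $r \times r$ submatrix. Applying this to $S$ with $r = k - l$ yields $S_2 \subseteq S_1^c$ with $|S_2| \le 2(k - l)$ and $\rank(S[S_2, S_2]) = k - l$.

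Finally, one checks that $S[S_2, S_2]$ is exactly the Schur complement of $A[S_1,S_1]$ inside the principal submatrix $A[S_1 \cup S_2, S_1 \cup S_2]$, so a second application of rank additivity gives $\rank(A[S_1 \cup S_2, S_1 \cup S_2]) = l + (k - l) = k$. The order is $|S_1| + |S_2| \le 2k - l$; if this inequality is strict I would pad with arbitrary further indices from $\{1,\ldots,n\} \setminus (S_1 \cup S_2)$, which cannot decrease the rank while the rank of any principal submatrix of $A$ is bounded above by $k$. The only genuinely substantive ingredient, and the step I expect to be the main obstacle, is the first one: translating the hypothesis on the number of nonzero eigenvalues into the existence of an invertible $l \times l$ principal submatrix via the symmetric-function interpretation of principal minors; once that is in hand, the remainder reduces the problem to the well-known bound for the case $l = 0$.
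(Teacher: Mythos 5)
Your proof is correct, and it reaches the paper's conclusion by a somewhat different route. The opening step is identical to the paper's: both of you extract an invertible $l\times l$ principal submatrix from the fact that the coefficient of $x^{n-l}$ in the characteristic polynomial is, up to sign, both the sum of the $l\times l$ principal minors and the product of the nonzero eigenvalues. You diverge in how you grow this block to order $2k-l$. The paper stays inside $A$: it permutes so that the first $k$ rows are linearly independent (keeping the invertible block in the corner) and then selects $k-l$ additional columns so that the leading $k\times(2k-l)$ block already has rank $k$, whence the leading $(2k-l)\times(2k-l)$ principal submatrix has rank $k$. You instead pass to the Schur complement of the invertible block, use Guttman rank additivity to see it has rank $k-l$, apply the ``order at most $2r$'' fact to that rank-$(k-l)$ matrix, and transfer the result back via the observation that restricting the Schur complement to $S_2$ gives the Schur complement inside $A[S_1\cup S_2,S_1\cup S_2]$, followed by a second application of rank additivity. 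Both arguments encode the same count — a spanning set of $k$ rows and a spanning set of $k$ columns forced to overlap in the $l$ indices of the invertible block — but yours is more modular (it reduces the general case to the case $l=0$ plus standard Schur-complement identities), while the paper's is a bare-hands basis-extension argument. One caveat you share with the paper: both proofs implicitly need $2k-l\le n$ (you need indices available for padding, the paper needs $2k-l$ columns to select). This can fail for non-diagonalizable $A$ — a single nilpotent Jordan block of order $3$ has $k=2$, $l=0$ and $2k-l=4>3$ — but it holds automatically in the diagonalizable setting $k=l$ where the lemma is applied.
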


\begin{proof}
Let $p(x)=x^n+p_1x^{n-1}+\cdots+p_l x^{n-l}$ be the characteristic polynomial of $A$. Since $A$ has $l$ nonzero eigenvalues, we have $p_l \neq 0$, and $A$ contains a principal $l \times l$ nonzero minor. Using permutation similarity we may assume that the leading $l \times l$ principal minor of $A$, call it $A_{11}$, is not equal to zero. Using another permutation similarity we may assume that the first $k$ rows of $A$ have rank $k$, i.e. are linearly independent:
$$A=\npmatrix{A_{11} & A_{12} & A_{13} \\ A_{21} & A_{22} & A_{23} \\ A_{31} & A_{32} & A_{33} },$$
where $A_{11}$ is invertible, and $\npmatrix{A_{11} & A_{12} & A_{13} \\ A_{21} & A_{22} & A_{23}}$ has full rank $k$. 
This implies that the matrix  $\npmatrix{ A_{12} & A_{13} \\ A_{22} & A_{23}}$ contains $k-l$ linearly independent columns. Using a permutation similarity that leaves the top left $k \times k$ submatrix fixed, we can assure that the $k \times (k+(k-l))$ submatrix of $A $ containing the first $k$ rows and the first $2k-l$ columns contains $k$ linearly independent columns. This implies that the top left $(2k-l) \times (2k-l)$ submatrix of $A$ has rank $k$. 
\end{proof}

The bound given in the above lemma cannot be improved in general as illustrated in the following example.

\begin{example}
Let $D_l$ be an $l \times l$ diagonal matrix with $l$ nonzero diagonal elements. The matrix
 $$A=\npmatrix{D_l & 0 & 0 \\ 0 & 0_{k-l} & I_{k-l} \\ 0 & 0 & 0_{k-l}}$$ 
has order $2k-l$, rank $k$ and $l$ nonzero eigenvalues. It is easy to check that $A$ has no proper principal submatrices with rank $k$. 
\end{example}

\begin{corollary}
Let $A \in M_n(\R)$ be a nonnegative matrix with $l$ nonzero eigenvalues and rank $k$. Then there exists a nonnegative matrix $\tilde{A}$ of order  $\tilde n={(2k-l)+(2k-l)^2}$, whose nonzero spectrum is the same as the nonzero spectrum of $A$ and whose Jordan canonical form $J(\tilde{A})$ satisfies: $J(A)=J(\tilde{A}) \oplus 0_{n-\tilde n}$.  
\end{corollary}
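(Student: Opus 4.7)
The plan is to combine the preceding lemma with Theorem~\ref{thm:general}. Since both previous results do the heavy lifting, the argument reduces to bookkeeping on sizes, so there is really no substantial obstacle; the only care needed is handling the case where $n$ is small compared to $\tilde n$ and the case where the bound $m + mr$ from Theorem~\ref{thm:general} falls strictly below $\tilde n$.

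First I would invoke the lemma just proved to produce a principal submatrix of order $m := 2k - l$ whose rank equals $k = \rank(A)$. Applying a permutation similarity, which preserves nonnegativity as well as the full Jordan structure, I may assume that this submatrix sits in the upper-left corner, so that
$$A=\npmatrix{A_{11} & A_{12} \\ A_{21} & A_{22}},\quad A_{11}\in M_m(\R),\ \rank(A_{11})=\rank(A)=k.$$
This is exactly the hypothesis required to feed $A$ into Theorem~\ref{thm:general}.

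Next, I observe that $A_{21}$ is an $(n-m)\times m$ block, so its rank $r$ automatically satisfies $r\le m = 2k-l$. Therefore, provided $n > m + mr$, Theorem~\ref{thm:general} produces a nonnegative matrix $\tilde A$ of order $m + mr \le m + m^2 = (2k-l) + (2k-l)^2 = \tilde n$, having the same nonzero spectrum as $A$, and with $J(A) = J(\tilde A)\oplus 0_{n - m - mr}$.

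Finally, to match the order stated in the corollary exactly, I would append a zero block of size $\tilde n - (m+mr)$ to $\tilde A$; this preserves nonnegativity, the nonzero spectrum, and the Jordan relation (simply moving the zero block from one side of the $\oplus$ to the other). The degenerate case $n \le \tilde n$, in which Theorem~\ref{thm:general} does not apply, is handled trivially by taking $\tilde A := A \oplus 0_{\tilde n - n}$. The result follows.
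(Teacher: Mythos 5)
Your proposal is correct and follows exactly the route the paper intends: the corollary is stated without proof precisely because it is the immediate combination of the preceding lemma (giving a principal $(2k-l)\times(2k-l)$ submatrix of full rank $k$, moved to the top-left by permutation similarity) with Theorem~\ref{thm:general}, using $r\le 2k-l$ and padding with zeros to reach order $\tilde n$ exactly. Your explicit handling of the degenerate case $n\le\tilde n$ is a sensible addition that the paper glosses over.
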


\begin{corollary}
Let $A \in M_n(\R)$ be a diagonalizable nonnegative matrix of rank $k$ and $n\geq k+k^2$. Then there exists a diagonalizable nonnegative matrix $\tilde{A} \in M_{k+k^2}(\R)$, whose nonzero spectrum is the same as the nonzero spectrum of $A$.  
\end{corollary}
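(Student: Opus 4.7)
The plan is to specialize Theorem \ref{thm:general} to the diagonalizable setting, the key observation being that when $A$ is diagonalizable, the rank of $A$ equals the number of nonzero eigenvalues of $A$. Indeed, if $A = PDP^{-1}$ with $D$ diagonal, then $\rank(A)=\rank(D)$ equals the number of nonzero entries on the diagonal of $D$, i.e. the number of nonzero eigenvalues. So with the notation of the previous lemma, $l=k$, and the lemma supplies a principal submatrix of order $2k-l=k$ whose rank equals $\rank(A)=k$. After a permutation similarity we may assume this submatrix sits in the top left, and write $A$ in block form as in Theorem \ref{thm:general} with $m=k$.

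Next, set $r=\rank(A_{21})$; since $A_{21}$ has $k$ columns we have $r\leq k$, so the size $m+mr=k+kr$ produced by Theorem \ref{thm:general} is at most $k+k^2$. The hypothesis $n\geq k+k^2$ guarantees $n\geq m+mr$; if the inequality is strict, Theorem \ref{thm:general} applies directly and produces a nonnegative matrix $\tilde{A}'\in M_{k+kr}(\R)$ with the same nonzero spectrum as $A$ and with $J(A)=J(\tilde{A}')\oplus 0_{n-k-kr}$. In the boundary case $n=m+mr$ we simply take $\tilde{A}'=A$, which already has the desired size bound.

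Diagonalizability of $\tilde{A}'$ follows from the Jordan form relation: since $A$ is diagonalizable, $J(A)$ is a diagonal matrix, and the direct summand decomposition $J(A)=J(\tilde{A}')\oplus 0_{n-k-kr}$ forces $J(\tilde{A}')$ to be diagonal as well. Hence $\tilde{A}'$ is diagonalizable and nonnegative with the required nonzero spectrum, and its order is $k+kr\leq k+k^2$.

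Finally, to obtain a realisation of the exact order $k+k^2$, define
$$\tilde{A}:=\tilde{A}'\oplus 0_{k+k^2-k-kr}\in M_{k+k^2}(\R).$$
Appending a zero block preserves nonnegativity, preserves the nonzero spectrum, and preserves diagonalizability (a block-diagonal matrix whose blocks are diagonalizable is itself diagonalizable). The only mild subtlety in the argument is dispatching the boundary case of the hypothesis $n>m+mr$ in Theorem \ref{thm:general}, and keeping track that $r$ may be strictly less than $k$ so that zero padding is genuinely needed, but both points are routine.
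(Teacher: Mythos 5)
Your proof is correct and follows essentially the same route as the paper: the paper's own argument is simply to observe that diagonalizability forces $l=k$ and then invoke the preceding corollary (lemma plus Theorem \ref{thm:general} plus zero-padding), which is exactly the chain you spell out. Your additional care with the boundary case $n=m+mr$ and with reading off diagonalizability of $\tilde{A}$ from the Jordan form relation $J(A)=J(\tilde{A})\oplus 0$ is sound and fills in details the paper leaves implicit.
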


\begin{proof}
For a diagonalizable matrix $A$ the rank of $A$ is equal to the number of nonzero eigenvalues of $A$. 
\end{proof}

\section{The Nonnegative Inverse Elementary Divisor Problem}

The nonnegative inverse elementary divisor problem (NIEDP) asks for a given realizable spectrum $\sigma=(\lambda_1,\lambda_2,\ldots,\lambda_n)$, what are the possible Jordan forms of realising matrices. Of course, if $\sigma$ has no repeated entries, this problem reduces to the NIEP for $\sigma$. It is conjectured, that if $\sigma$ is realizable, then it is realizable by a nonnegative nonderogatory matrix, but this appears to be still open.

 Minc \cite{MR630033} proved that if $\sigma$ is diagonalizably realizable by a positive matrix $A$, then for every Jordan form $J$ with spectrum $\sigma$, $\sigma$ is realizable by a positive matrix similar to $J$. This result is conjectured to hold, if we relax the condition that $A$ is positive to the assumption that $A$ is nonnegative, but this also seems to be open at present.

%

Now consider the classic example $$\sigma(t)=(3+t,3-t, -2,-2,-2).$$ We ask the questions what is the minimal $t$ for which $\sigma(t)$ is realizable by a nonnegative matrix with a given Jordan canonical form $J_i(t)$ associated with $\sigma(t)$, where $J_1(t)$ is a diagonal matrix, $J_2(t)$ is the Jordan canonical form with the minimal polynomial of degree $4$, and $J_3(t)$
 is nonderogatory. We will denote the minimal $t$ in each case by $t_i$.

It is shown in \cite{2017arXiv170108651C} that $\sigma(t)$ is realizable by a diagonalizable nonnegative matrix only for $t \geq 1,$ i.e. $t_1=1$.  In this case, the condition for diagonalizable realizability and symmetric realizability coincide. Also, in the same paper it is shown that $$\sigma=(3+t,3-t,-1.9,-2,-2.1)$$ is realizable for $t \geq \frac{1}{10}\sqrt{120\sqrt{3166}-3899} \approx 0.435$, while by McDonald-Neumann inequality, given in \cite{MR1780536}, $t \geq 0.9$ is necessary for symmetric realizability.  
 
 On the other hand, $\sigma(t)$ is realizable for $t \geq \sqrt{16 \sqrt{6}-39}\approx 0.438$. This is shown in \cite{MR1733536}, where a nonderogatory matrix with spectrum $\sigma(t_3),$ $t_3=\sqrt{16 \sqrt{6}-39}$, is provided. 

Here we present the matrix
$$A(t)=\left(
\begin{array}{ccccc}
 0 & 2 & \frac{1}{2} & 0 & 0 \\
 2 & 0 & \frac{1}{2} & 0 & 0 \\
 \frac{256}{t^2+7}-32 & \frac{256}{t^2+7}-32 & 0 & 1 & 0 \\
 0 & 0 & \frac{t^4+78 t^2-15}{2 \left(t^2+7\right)} & 0 & \frac{1}{2} \sqrt{2 t^2+30} \\
 0 & 0 & \frac{2 \sqrt{2} \left(3 t^4+58 t^2+3\right)}{\left(t^2+7\right) \sqrt{t^2+15}} & \frac{1}{2} \sqrt{2 t^2+30} & 0 \\
\end{array}
\right).$$
$A(t)$ has eigenvalues $(3+t,3-t,-2,-2,-2)$, it is nonnegative, and has Jordan canonical form $J_2(t)$ for $\sqrt{16 \sqrt{6}-39}\leq t$.
This shows that $t_2=t_3$.

\bibliographystyle{plain}
\bibliography{../Bib/NIEP}
\end{document}